\documentclass[12pt]{amsart}
\usepackage{verbatim}

\setlength{\baselineskip}{16.0pt} 
\setlength{\parskip}{3pt plus 2pt} \setlength{\parindent}{20pt}
\setlength{\oddsidemargin}{0.5cm}
\setlength{\evensidemargin}{0.5cm}
\setlength{\marginparsep}{0.75cm}
\setlength{\marginparwidth}{2.5cm}
\setlength{\marginparpush}{1.0cm} 
\setlength{\textwidth}{156mm}

\def\ep{\varepsilon}

\def\wt{\widetilde}
\def\Re{\text{\rm Re}\,}

\pagestyle{myheadings}

\newtheorem{thm}{Theorem}

\newtheorem{prop}[thm]{Proposition}

\begin{document}

\baselineskip=15pt

\title[Bergman kernel and logarithmic capacity]
{One dimensional estimates\\
for the Bergman kernel \\ and logarithmic capacity}

\def\nl{\newline\phantom{a}\hskip 7pt}

\author{Zbigniew B\l ocki, W\l odzimierz Zwonek}
\address{\!\!\!\!Uniwersytet Jagiello\'nski \nl Instytut Matematyki
\nl \L ojasiewicza 6 \nl 30-348 Krak\'ow \nl Poland
\nl {\rm Zbigniew.Blocki@im.uj.edu.pl 
\nl Wlodzimierz.Zwonek@im.uj.edu.pl}}

\thanks {The first named author was supported by the Ideas Plus 
grant no. 0001/ID3/2014/63 of the Polish Ministry of Science and 
Higher Education and the second named author by 
the Polish National Science Centre (NCN) Opus grant no.
2015/17/B/ST1/00996}

\begin{abstract}
Carleson showed that the Bergman space for a domain on the plane
is trivial if and only if its complement is polar. Here we give
a quantitative version of this result. One is the Suita conjecture,
established by the first-named author in 2012, the other is 
an upper bound for the Bergman kernel in terms of 
logarithmic capacity. We give some other estimates
for those quantities as well. We also show that the volume of
sublevel sets for the Green function is not convex for
all regular non simply connected domains,
generalizing a recent example of Forn\ae ss.
\end{abstract}

\makeatletter
\@namedef{subjclassname@2010}{%
  \textup{2010} Mathematics Subject Classification}
\makeatother

\subjclass[2010]{30H20, 30C85, 32A36}

\keywords{Bergman kernel, logarithmic capacity, Suita conjecture}

\maketitle

\section{Introduction}

For $w\in\Omega$, where $\Omega$ is a domain in $\mathbb C$, Carleson
\cite{C} (see also \cite{Co}) showed the Bergman space $A^2(\Omega)$ 
of square integrable holomorphic functions in $\Omega$ is trivial 
if and only if the complement $\mathbb C\setminus\Omega$ is polar. 
The estimate conjectured by Suita \cite{S} and proved in \cite{Binv}
\begin{equation}\label{suita}
     c_\Omega(w)^2\leq\pi K_\Omega(w),\ \ \ w\in\Omega
\end{equation}
gives a quantitative version of one of the implications. Here  
\begin{equation}\label{cap}
c_\Omega(w)=\exp\left(\lim_{z\to w}\big(G_\Omega(z,w)
       -\log|z-w|\big)\right)
\end{equation}
is the logarithmic capacity of $\mathbb C\setminus\Omega$ with
respect to $w$, 
  $$G_\Omega(z,w)=\sup\{u(z)\colon u\in SH(\Omega),\ u<0,\
     \limsup_{\zeta\to w}\left(u(\zeta)-\log|\zeta-w|)<\infty
     \right\}$$ 
is the (negative) Green function,
  $$K_\Omega(w)=\sup\{|f(w)|^2\colon f\in A^2(\Omega),\ 
     ||f||\leq 1\}$$
is the Bergman kernel on the diagonal and $||f||=||f||_{L^2(\Omega)}$.  
 
Our first result is the following upper bound for the Bergman kernel:

\begin{thm} \label{thm1}
Let $\Omega$ be a domain in $\mathbb C$ and $w\in\Omega$.
Assume that $0<r\leq\delta_\Omega(w):=\text{\rm dist\,}
(w,\partial\Omega)$. Then
  $$K_\Omega(w)\leq\frac 1{\displaystyle -2\pi r^2
     \max_{z\in\overline\Delta(w,r)}G_\Omega(z,w)}.$$
\end{thm}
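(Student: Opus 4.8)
The plan is to use the dual description $K_\Omega(w)=1/\inf\{\|f\|^2:f\in A^2(\Omega),\ f(w)=1\}$, so that the asserted bound is equivalent to the lower bound $\|f\|^2\ge-2\pi r^2M$ for every such $f$, where $M:=\max_{z\in\overline\Delta(w,r)}G_\Omega(z,w)$. If the complement of $\Omega$ is polar or $A^2(\Omega)=\{0\}$ then $K_\Omega(w)=0$ and there is nothing to prove, so I may assume the Green function exists and $M\in(-\infty,0)$. First I would record two elementary facts. Writing $\Omega_s:=\{z\in\Omega:G_\Omega(z,w)<s\}$, the strong maximum principle applied to $z\mapsto G_\Omega(z,w)$, harmonic on $\Delta(w,r)\setminus\{w\}$ with a $-\infty$ singularity at $w$, gives $G_\Omega(\cdot,w)<M$ on the open disc, so that $\Delta(w,r)\subseteq\Omega_M$. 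Second, since $f$ is holomorphic the circular means $\rho\mapsto\frac1{2\pi}\int_0^{2\pi}|f(w+\rho e^{i\theta})|^2\,d\theta$ are nondecreasing and start at $|f(w)|^2=1$; integrating in $\rho$ over $(0,r)$ yields
\[
  \int_{\Omega_M}|f|^2\,dA\ \ge\ \int_{\Delta(w,r)}|f|^2\,dA\ \ge\ \pi r^2 .
\]

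The heart of the argument is the monotonicity lemma: for fixed $f$ the function $s\mapsto e^{-2s}\int_{\Omega_s}|f|^2\,dA$ is nondecreasing on $(-\infty,0)$. Granting this, I let $s\to0^-$; since $\Omega_s\nearrow\Omega$, monotone convergence gives $e^{-2s}\int_{\Omega_s}|f|^2\,dA\to\|f\|^2$, whence
\[
  \|f\|^2\ \ge\ e^{-2M}\int_{\Omega_M}|f|^2\,dA\ \ge\ \pi r^2\,e^{-2M}\ \ge\ -2\pi r^2M,
\]
the last step being the elementary inequality $e^{x}\ge x$ with $x=-2M>0$. This is exactly the required estimate, and it shows that the disc, for which the chain is sharp up to the final inequality, is the extremal model.

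It remains to prove the monotonicity lemma, which I expect to be the main obstacle. When $\Omega$ is simply connected it is transparent: taking a Riemann map $\varphi\colon\Omega\to\Delta(0,1)$ with $\varphi(w)=0$ we have $G_\Omega(\cdot,w)=\log|\varphi|$, so $\Omega_s=\varphi^{-1}(\Delta(0,e^s))$, and the substitution $\zeta=\varphi(z)$ turns the integral into $\int_{\Delta(0,e^s)}|h|^2\,dA$ with $h:=(f\circ\varphi^{-1})\,(\varphi^{-1})'$ holomorphic on the disc. Expanding $h=\sum_{n\ge0}h_n\zeta^n$ and integrating in polar coordinates gives
\[
  e^{-2s}\int_{\Omega_s}|f|^2\,dA\ =\ \pi\sum_{n\ge0}\frac{|h_n|^2}{n+1}\,e^{2ns},
\]
a sum of nondecreasing functions of $s$, which proves the claim. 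The difficulty is the general, multiply connected case, where $G_\Omega(\cdot,w)$ is no longer a single logarithm and the $\Omega_s$ need not be discs. The natural target is then the differential form of the lemma,
\[
  \int_{\{G_\Omega(\cdot,w)=s\}}\frac{|f|^2}{|\nabla G_\Omega(\cdot,w)|}\,d\ell\ \ge\ 2\int_{\Omega_s}|f|^2\,dA ,
\]
valid for almost every $s$ (by Sard's theorem the level sets are then smooth), whose left-hand side is $\frac{d}{ds}\int_{\Omega_s}|f|^2\,dA$ by the coarea formula. I would try to establish it by lifting to the universal covering $\Delta(0,1)\to\Omega$, where $f$ becomes automorphic and $\Omega_s$ lifts to a sublevel set of a Blaschke-type product, and by exploiting the level-set sub-mean value inequality $\frac1{2\pi}\int_{\{G_\Omega=s\}}|f|^2\,\partial_n G_\Omega\,d\ell\ge|f(w)|^2$ (which comes from $\Delta G_\Omega(\cdot,w)=2\pi\delta_w$ and $G_\Omega(\cdot,w)=0$ on $\partial\Omega$). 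Reconciling the two dual weights $|\nabla G_\Omega|^{\pm1}$ on the level curves, together with the automorphy bookkeeping on the cover, is precisely the delicate point; once it is settled, the three displayed inequalities above deliver the theorem.
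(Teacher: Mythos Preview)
Your approach differs substantially from the paper's, and the monotonicity lemma on which it rests is in fact \emph{false} beyond the simply connected case. If $s\mapsto e^{-2s}\int_{\Omega_s}|f|^2\,d\lambda$ were nondecreasing for every holomorphic $f$, then comparing the limit as $s\to-\infty$ (which equals $\pi|f(w)|^2/c_\Omega(w)^2$, since $\Omega_s$ is asymptotically the disc of radius $e^s/c_\Omega(w)$ about $w$) with the limit $\|f\|^2$ as $s\to 0^-$ would yield $\pi|f(w)|^2\leq c_\Omega(w)^2\|f\|^2$ for every $f$, hence
\[
K_\Omega(w)\ \leq\ \frac{c_\Omega(w)^2}{\pi}.
\]
Together with the Suita inequality \eqref{suita} this forces equality, which is known to characterize domains biholomorphic to the disc minus a closed polar set. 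Thus for any genuinely multiply connected $\Omega$ your lemma must fail --- precisely for the Bergman extremal $f$ --- and the obstacle you flag on the universal cover is intrinsic, not a bookkeeping issue. Even your weaker intermediate bound $\|f\|^2\geq e^{-2M}\pi r^2$ breaks down: as $r\to 0$ one has $M=\log\big(r\,c_\Omega(w)\big)+o(1)$, so $e^{-2M}\pi r^2\to\pi/c_\Omega(w)^2>1/K_\Omega(w)$ whenever Suita is strict. Your chain of inequalities therefore overshoots the theorem.

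The paper's argument avoids sublevel sets entirely. After reducing to $w=0$ and smooth bounded $\Omega$, it introduces the harmonic function $u$ on $\Omega\setminus\overline\Delta_r$ with $u=1$ on $\partial\Omega$ and $u=0$ on $\partial\Delta_r$, and writes via the Cauchy formula and Stokes
\[
f(0)=\frac{1}{2\pi i}\int_{\partial\Omega}\frac{f}{z}\,dz
     =\frac{1}{\pi}\int_{\Omega\setminus\overline\Delta_r}\frac{f\,u_{\bar z}}{z}\,d\lambda.
\]
Cauchy--Schwarz gives $|f(0)|^2\leq(\pi^2 r^2)^{-1}\|f\|^2\int|u_{\bar z}|^2\,d\lambda=(4\pi^2 r^2)^{-1}\|f\|^2\int_{\partial\Omega}u_n\,d\sigma$. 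The maximum principle yields $u\geq 1+G/(-\max_{\overline\Delta_r}G)$ on $\Omega\setminus\Delta_r$, hence $u_n\leq G_n/(-\max_{\overline\Delta_r}G)$ on $\partial\Omega$, and the flux identity $\int_{\partial\Omega}G_n\,d\sigma=2\pi$ finishes the estimate. The proof is short, purely one-dimensional, and never passes through an inequality sharper than the theorem itself.
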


As a consequence we will obtain the following quantitative version 
of the other implication in the Carleson characterization:

\begin{thm} \label{thm2}
There exists a uniform constant $C>0$ such that for 
$w\in\Omega$, where $\Omega$ is a domain in $\mathbb C$, we have
  $$K_\Omega(w)\leq\frac C{\delta_\Omega(w)^2
      \log\left(1/(\delta_\Omega(w)c_\Omega(w))\right)}.$$
\end{thm}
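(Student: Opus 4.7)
The plan is to apply Theorem~\ref{thm1} at the scale $r=\delta_\Omega(w)/2$ and reduce the claim to a quantitative lower bound
\[
-\max_{\overline{\Delta(w,\delta_\Omega(w)/2)}}G_\Omega(\cdot,w)\;\geq\;\tfrac{1}{3}\log\!\bigl(1/(\delta_\Omega(w)c_\Omega(w))\bigr).
\]
The link between $G_\Omega$ and $c_\Omega$ is the well-known fact that $h(z):=G_\Omega(z,w)-\log|z-w|$ extends to a harmonic function on $\Omega$ with $h(w)=\log c_\Omega(w)$.

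Write $\delta=\delta_\Omega(w)$. Monotonicity of the Green function ($\Omega\supset\Delta(w,\delta)$ forces $G_\Omega\leq G_{\Delta(w,\delta)}$) shows that the harmonic function
\[
\phi(z):=-\log\delta-h(z)=\log\frac{|z-w|}{\delta}-G_\Omega(z,w)
\]
is nonnegative on $\Delta(w,\delta)$, with $\phi(w)=\log(1/(\delta c_\Omega(w)))\geq 0$ (incidentally recovering $\delta c_\Omega(w)\leq 1$). Applying Harnack's inequality to $\phi$ on $\Delta(w,\delta)$, for $|z-w|=\delta/2$ we obtain
\[
\phi(z)\;\geq\;\frac{\delta-\delta/2}{\delta+\delta/2}\,\phi(w)\;=\;\tfrac{1}{3}\log\!\bigl(1/(\delta c_\Omega(w))\bigr),
\]
equivalently $G_\Omega(z,w)\leq-\log 2-\tfrac{1}{3}\log(1/(\delta c_\Omega(w)))$ on the circle $\partial\Delta(w,\delta/2)$. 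The maximum principle for $G_\Omega(\cdot,w)$ on $\overline{\Delta(w,\delta/2)}\setminus\{w\}$ (with $-\infty$ singularity at $w$) promotes this to the same bound for $\max_{\overline{\Delta(w,\delta/2)}}G_\Omega(\cdot,w)$.

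Plugging into Theorem~\ref{thm1} with $r=\delta/2$ then yields
\[
K_\Omega(w)\;\leq\;\frac{1}{2\pi(\delta/2)^2\cdot\tfrac{1}{3}\log(1/(\delta c_\Omega(w)))}\;=\;\frac{6/\pi}{\delta_\Omega(w)^2\log\bigl(1/(\delta_\Omega(w)c_\Omega(w))\bigr)},
\]
so $C=6/\pi$ works. The degenerate case $\delta c_\Omega(w)=1$ makes the right-hand side infinite and the inequality trivial; the polar case $c_\Omega(w)=0$ matches Carleson's theorem (both sides vanish). I do not foresee any serious obstacle: the argument is a direct coupling of Theorem~\ref{thm1} with Green-function monotonicity and Harnack's inequality, and the only points requiring a moment's care are the harmonic extension of $\phi$ across the logarithmic pole at $w$ and the applicability of Harnack on the open disk $\Delta(w,\delta)$, where $\phi$ is harmonic and nonnegative.
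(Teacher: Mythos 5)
Your proof is correct and follows essentially the same route as the paper: Theorem~\ref{thm1} at radius $r=\delta_\Omega(w)/2$ combined with a Harnack-type estimate on $\Delta(w,\delta_\Omega(w))$ giving the factor $\tfrac{R-r}{R+r}=\tfrac13$, which yields the same constant $C=6/\pi$. The only (cosmetic) difference is that the paper applies the Poisson-formula bound directly to the nonnegative superharmonic function $-G$ together with the circular mean identity $\frac1{2\pi}\int_0^{2\pi}G(w+Re^{it})\,dt=\log(Rc_\Omega(w))$, whereas you apply Harnack to the nonnegative harmonic function $\log(|z-w|/\delta)-G$, using Green-function monotonicity for its nonnegativity.
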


We will also consider the following counterparts of the Bergman
kernel for higher derivatives for $j=0,1,\ldots$
\begin{align*}
  K^{(j)}_\Omega(w):=\sup\{|f^{(j)}(w)|^2&\colon 
   f\in A^2(\Omega),\ ||f||\leq 1,\\ 
   &f(0)=f'(0)=\dots=f^{(j-1)}(0)=0\}.
\end{align*}
We will prove the following generalization of \eqref{suita}:

\begin{thm} \label{thm3}
For $w\in\Omega\subset\mathbb C$ and $j=0,1,2,\dots$ we have
  $$K^{(j)}_\Omega(w)\geq\frac{j!(j+1)!}\pi(c_\Omega(w))^{2j+2}.$$
\end{thm}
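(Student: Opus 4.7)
The plan is to imitate the proof of the Suita conjecture (the case $j=0$), by reducing to a suitable weighted Bergman kernel and then exploiting plurisubharmonic variation along the sublevel sets of $G_\Omega(\cdot,w)$. The first step is purely algebraic: any $F\in A^2(\Omega)$ vanishing to order $j$ at $w$ factors uniquely as $F(z)=(z-w)^j G(z)$ with $G\in\mathcal O(\Omega)$, and then $\|F\|^2=\int_\Omega|G|^2|z-w|^{2j}\,dV$ while $F^{(j)}(w)=j!\,G(w)$. Writing $\tilde K_\Omega(w)$ for the Bergman kernel at $w$ of the weighted space $A^2(\Omega,|z-w|^{2j}\,dV)$, this identifies $K^{(j)}_\Omega(w)=(j!)^2\,\tilde K_\Omega(w)$, and reduces the theorem to
$$\tilde K_\Omega(w)\ \ge\ \frac{(j+1)\,c_\Omega(w)^{2(j+1)}}{\pi}.$$
The bound is sharp on disks: the orthonormal monomials $\bigl\{\sqrt{(n+j+1)/\pi}\,r^{-(n+j+1)}(z-w)^n\bigr\}_{n\ge 0}$ immediately give $\tilde K_{D(w,r)}(w)=(j+1)/(\pi r^{2(j+1)})$, matching $c_{D(w,r)}(w)=1/r$.

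For $t\ge 0$ set $\Omega_t:=\{G_\Omega(\cdot,w)<-t\}$; by Yamaguchi's theorem the set $\{(s,z)\in(\Delta\setminus\{0\})\times\Omega:G_\Omega(z,w)<\log|s|\}$ is pseudoconvex in $\mathbb C^2$, with slice $\Omega_{-\log|s|}$ at $s$. The core analytic input is that, by an appropriate version of Berndtsson's plurisubharmonic variation theorem accommodating the (singular) psh multiplicative weight $|z-w|^{2j}$---equivalently, log-plurisubharmonicity of the higher-order kernel $K^{(j)}_{\Omega_t}(w)$ in $t$---the function $s\mapsto\log\tilde K_{\Omega_{-\log|s|}}(w)$ is subharmonic on the punctured disk; rotation invariance then forces $\phi(t):=\log\tilde K_{\Omega_t}(w)$ to be convex in $t\ge 0$. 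The asymptotics as $t\to\infty$ are read off from the expansion $G_\Omega(z,w)=\log|z-w|+\log c_\Omega(w)+o(1)$ near $w$: the set $\Omega_t$ is squeezed between concentric disks of radius $e^{-t}/c_\Omega(w)\cdot(1\pm o(1))$, so monotonicity of $\tilde K$ in the domain and the disk formula yield
$$\phi(t)-2(j+1)t\ \longrightarrow\ L:=\log\left(\frac{(j+1)\,c_\Omega(w)^{2(j+1)}}{\pi}\right)\quad(t\to\infty).$$

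Since $\psi(t):=\phi(t)-2(j+1)t$ is convex on $[0,\infty)$ with finite limit $L$ at $+\infty$, one must have $\psi'\le 0$ (otherwise $\psi\to+\infty$), so $\psi$ is non-increasing and $\psi(0)\ge L$. Exponentiating gives $\tilde K_\Omega(w)\ge(j+1)c_\Omega(w)^{2(j+1)}/\pi$, which combined with the reduction yields the theorem. I expect the main technical obstacle to be the convexity step: the precise form of Berndtsson's theorem needed involves the singular psh multiplicative weight $|z-w|^{2j}$, which is not literally $e^{-\varphi}$ for a psh $\varphi$, so one must either invoke an extension of the Berndtsson--Maitani--Yamaguchi result to this setting, or bypass it by establishing the inequality $\tilde K_\Omega(w)\ge(j+1)c_\Omega(w)^{2(j+1)}/\pi$ directly from an Ohsawa--Takegoshi extension of the $j$-jet $(0,\dots,0,a)$ at $w$ with optimal constant, using a truncation of the psh weight $2(j+1)G_\Omega(\cdot,w)$.
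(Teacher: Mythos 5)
Your algebraic reduction to the weighted kernel $\tilde K_\Omega(w)$ of $A^2(\Omega,|z-w|^{2j}\,d\lambda)$, the disc computation, the squeezing of $\Omega_t$ between discs of radius $e^{-t}c_\Omega(w)^{-1}(1+o(1))$, and the closing argument (a convex function with a finite limit at $+\infty$ is non-increasing) are all correct; for $j=0$ this is precisely the alternative route via \eqref{blb} that the paper mentions but deliberately does not take. The difficulty is the one step you flag yourself and do not resolve, and it is not a technicality: for $j\geq 1$ the subharmonicity of $s\mapsto\log\tilde K_{\Omega_{-\log|s|}}(w)$ is not available. The multiplicative weight is $|z-w|^{2j}=e^{-\varphi}$ with $\varphi=-2j\log|z-w|$, which is superharmonic, i.e.\ has the wrong sign for the Maitani--Yamaguchi/Berndtsson theorem. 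Equivalently, in the direct-image picture $K^{(j)}_{\Omega_t}(w)$ is the norm of the evaluation functional $f\mapsto f^{(j)}(w)$ restricted to the holomorphic subbundle $H_j=\{f\colon f(w)=\dots=f^{(j-1)}(w)=0\}$ of $A^2(\Omega_t)$. What Berndtsson's positivity gives for free is log-subharmonicity of the norm of that functional on the \emph{full} bundle, i.e.\ of $\sup\{|f^{(j)}(w)|^2\colon\|f\|\leq 1\}$ without the vanishing conditions; that quantity dominates $K^{(j)}$, so a lower bound for it does not transfer. Since subbundles of Griffiths/Nakano positive bundles need not be positive, there is no formal way to restrict, and your convexity input remains unproven.

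Your proposed fallback --- ``an Ohsawa--Takegoshi extension of the $j$-jet at $w$ with optimal constant'' --- is not a bypass but is essentially the statement being proved, so invoking it would be circular unless you construct it. That construction is exactly what the paper does: it runs the twisted $\bar\partial$-estimate of \cite{Binv} with datum $\alpha=\frac{\partial}{\partial\bar z}\bigl(z^j\chi(|z|)\bigr)$ and explicit weights $\eta\circ G$, $\gamma\circ G$ built from the Green function (with a non-integrability of $|z|^{2j}\,\Gamma\circ G$ at $w$ forcing the vanishing of the correction term to order $j$), obtaining the sharp constant in the limit $\ep\to 0$ by a purely one-dimensional argument. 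So your endgame is sound, but the central analytic step is missing, and filling it would likely require either a nontrivial extension of the variation theorems to jet kernels or exactly the kind of explicit weight construction the paper carries out.
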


The inequality is optimal, one can easily check that the equality
holds for $\Omega=\Delta$, the unit disc, and $w=0$.

It is clear that the dimension of $A^2(\Omega)$ is infinite
if and only if, for a given $w$, there exists infinitely many
$j$'s such that $K^{(j)}_\Omega(w)>0$. Therefore, Theorem \ref{thm3}
gives a quantitative version of a result of Wiegerinck \cite{W}
who showed that if $\mathbb C\setminus\Omega$ is not polar then
$A^2(\Omega)$ is infinitely dimensional.

Since the proof of Theorem \ref{thm2} also easily gives the upper 
bound
  $$K^{(j)}_\Omega(w)\leq\frac{C_j}{\delta_\Omega(w)^{2+j}
      \log\left(1/(\delta_\Omega(w)c_\Omega(w))\right)},\ \ \ 
      w\in\Omega,$$
and by Proposition \ref{max} below we have the following
characterization of domains in dimension one:

\begin{thm}
For $w\in\Omega\subset\mathbb C$ and $j=0,1,2,\dots$ 
the following are equivalent

i) $\mathbb C\setminus\Omega$ is not polar;

ii) $K_\Omega^{(j)}(w)>0$;

iii) $\log K_\Omega^{(j)}$ is smooth and strongly subharmonic;

iv) $A^2(\Omega)\neq\{0\}$;

v) $\text{\rm dim\,}A^2(\Omega)=\infty$. 
\end{thm}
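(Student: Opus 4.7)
The plan is to close the cycle $(\mathrm{i})\Rightarrow(\mathrm{ii})\Rightarrow(\mathrm{iv})\Rightarrow(\mathrm{v})\Rightarrow(\mathrm{i})$ and to handle $(\mathrm{iii})$ through the separate equivalence $(\mathrm{ii})\Leftrightarrow(\mathrm{iii})$. Each of the four implications in the cycle reduces immediately to something already available. For $(\mathrm{i})\Rightarrow(\mathrm{ii})$, non-polarity of $\mathbb C\setminus\Omega$ gives $c_\Omega(w)>0$, so Theorem \ref{thm3} yields $K^{(j)}_\Omega(w)\geq\frac{j!(j+1)!}{\pi}(c_\Omega(w))^{2j+2}>0$. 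For $(\mathrm{ii})\Rightarrow(\mathrm{iv})$, any admissible $f$ with $f^{(j)}(w)\neq 0$ is a nonzero element of $A^2(\Omega)$. The step $(\mathrm{iv})\Rightarrow(\mathrm{v})$ is precisely Wiegerinck's theorem \cite{W} quoted in the paragraph preceding the statement, and $(\mathrm{v})\Rightarrow(\mathrm{i})$ is the easy direction of Carleson's characterization \cite{C}, since $\dim A^2(\Omega)=\infty$ certainly implies $A^2(\Omega)\neq\{0\}$.

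For $(\mathrm{ii})\Leftrightarrow(\mathrm{iii})$, the direction $(\mathrm{iii})\Rightarrow(\mathrm{ii})$ is trivial: smoothness of $\log K^{(j)}_\Omega$ at $w$ forces $K^{(j)}_\Omega(w)>0$. Conversely, once $(\mathrm{ii})$ is known at one point $w$, the cycle above gives non-polarity of $\mathbb C\setminus\Omega$, and Theorem \ref{thm3} then propagates positivity of $K^{(j)}_\Omega$ to every point of $\Omega$. The upper bound noted immediately after Theorem \ref{thm3},
\[
   K^{(j)}_\Omega(z)\leq\frac{C_j}{\delta_\Omega(z)^{2+j}\log(1/(\delta_\Omega(z)c_\Omega(z)))},
\]
ensures that $K^{(j)}_\Omega$ is locally bounded on $\Omega$, and smoothness of $\log K^{(j)}_\Omega$ then follows from the standard continuity/regularity analysis of the extremal problem defining $K^{(j)}_\Omega$. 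Strong subharmonicity is supplied by the forthcoming Proposition \ref{max}, which is cited in the paragraph preceding the statement exactly for this purpose.

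The main obstacle is the strict subharmonicity clause in $(\mathrm{iii})$. Weak log-subharmonicity of $K^{(j)}_\Omega$ is classical for $j=0$ and extends to higher $j$ by Maitani--Yamaguchi/Berndtsson-type variational arguments applied to the constrained extremal problem, but ruling out locally harmonic behavior of $\log K^{(j)}_\Omega$ is a genuinely separate point; this is what Proposition \ref{max} is engineered to deliver. Everything else in the theorem is bookkeeping on top of Theorem \ref{thm3}, Wiegerinck's theorem, and Carleson's theorem.
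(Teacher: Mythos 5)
Your proposal is correct in substance and rests on the same two load-bearing ingredients as the paper — Theorem \ref{thm3} for positivity of $K^{(j)}_\Omega$ and Proposition \ref{max} for the strict subharmonicity of $\log K^{(j)}_\Omega$ — but you close the cycle differently. The paper's (admittedly unwritten) argument is self-contained: the displayed upper bound $K^{(j)}_\Omega(w)\leq C_j/\bigl(\delta_\Omega(w)^{2+j}\log(1/(\delta_\Omega(w)c_\Omega(w)))\bigr)$ gives not~(i) $\Rightarrow$ not~(iv) directly (a polar complement forces $c_\Omega\equiv 0$, hence $K^{(j)}_\Omega\equiv 0$ for every $j$, so every $f\in A^2(\Omega)$ vanishes to infinite order at every point and is zero), while Theorem \ref{thm3} gives (i) $\Rightarrow$ (v) directly (positivity of $K^{(j)}_\Omega(w)$ for every $j$ produces functions vanishing at $w$ to order exactly $j$, which are linearly independent). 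You instead import Carleson's and Wiegerinck's theorems as black boxes for (v) $\Rightarrow$ (i) and (iv) $\Rightarrow$ (v); this is logically admissible but misses the point that the theorem is meant as a corollary of the paper's own quantitative estimates, which \emph{reprove} both classical results. Two smaller issues deserve attention: (a) Wiegerinck's theorem as quoted in the paper is (i) $\Rightarrow$ (v), so invoking it "as quoted" for the step (iv) $\Rightarrow$ (v) inside a cycle where (i) has not yet been reached is circular unless you use the stronger dichotomy form of his result ($\dim A^2(\Omega)$ is $0$ or $\infty$); the internal route above avoids this entirely. (b) The appeal to Maitani--Yamaguchi/Berndtsson variational arguments for log-subharmonicity is a red herring: Proposition \ref{max} computes $(\log K^{(j)}_\Omega)_{z\bar z}=K^{(j+1)}_\Omega/K^{(j)}_\Omega$ exactly, and the strict positivity of the right-hand side requires Theorem \ref{thm3} applied with $j+1$ in place of $j$ — a point your write-up leaves implicit and should state.
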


A different proof of the Suita conjecture \eqref{suita} was 
given in \cite{Blb}. It follows from the lower bound
\begin{equation}\label{blb}
  K_\Omega(w)\geq\frac 1{e^{-2t}
     \lambda(\{G_\Omega(\cdot,w)<t\})}
\end{equation} 
for $t<0$. This inequality was proved in \cite{Blb} using
the tensor power trick which requires a corresponding inequality
for pseudoconvex domains in $\mathbb C^n$ for arbitrary $n$.
As noticed by Lempert (see also \cite{BL}), \eqref{blb} can
also be proved using the variational formula for the Bergman 
kernel in $\mathbb C^2$ of Maitani-Yamaguchi \cite{MY}
(generalized by Berndtsson \cite{Be} to higher dimensions). Both 
proofs therefore make crucial use of several complex variables. 
It would be interesting to find a purely one-dimensional proof 
of \eqref{blb}.

It was shown in \cite{BZ} that the right-hand side of \eqref{blb}
is non-increasing in $t$ (it is an open problem in higher
dimensions). Also, a more general conjecture
was given, namely that the function 
\begin{equation}\label{funkcja}
  (-\infty,0)\ni t\longmapsto\log\lambda(\{G_\Omega(\cdot,w)<t\})
\end{equation}  
is convex. A counterexample was found by Forn\ae ss \cite{F}.
It was also shown numerically in \cite{AC} that the conjecture
does not hold in an annulus. Here we will generalize and simplify 
both results proving the following:

\begin{thm}\label{thm4}
Assume that $w\in\Omega$, where $\Omega$ is a domain in $\mathbb C$,
are such that $\nabla G(z_0)=0$ for some 
$z_0\in\Omega\setminus\{w\}$, where $G=G_\Omega(\cdot,w)$. Then 
the function \eqref{funkcja} is not convex near $t_0=G(z_0)$.
\end{thm}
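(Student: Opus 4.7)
The strategy is to obtain an explicit singular expansion of $F(t):=\lambda(\{G<t\})$ at $t_0$ and to contradict the finiteness of the right difference quotients of $\log F$ that convexity would impose. Since $G$ is harmonic and non-constant near $z_0$, we may write $G-t_0=\Re h$ for a holomorphic function $h$ defined near $z_0$ with $h(z_0)=0$. The hypothesis $\nabla G(z_0)=0$ forces $h'(z_0)=0$, so $h$ has a zero of some finite order $k\ge 2$ at $z_0$. Writing $h(z)=\bigl((z-z_0)g(z)^{1/k}\bigr)^k$ near $z_0$ with $g$ holomorphic and nonvanishing, the local holomorphic coordinate $\zeta:=(z-z_0)g(z)^{1/k}$ brings $G-t_0$ to the normal form $\Re(\zeta^k)$, and the Jacobian $J(\zeta):=|dz/d\zeta|^2$ is a real-analytic positive function with $J(0)>0$.

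Next I would fix a small disc $B_R$ in $\zeta$-coordinates whose preimage $U\subset\Omega$ contains no other critical point of $G$, and split
$$F(t_0+s)=\int_{B_R}J(\zeta)\,\mathbf{1}_{\{\Re\zeta^k<s\}}\,d\lambda(\zeta)+\lambda\bigl(\{G<t_0+s\}\setminus U\bigr),\qquad s=t-t_0.$$
The second summand is $C^\infty$ in $s$ near $0$, because $|\nabla G|$ stays bounded below on $\Omega\setminus U$ and $\partial U$ can be chosen transverse to all nearby level sets. For the first summand, coarea applied to the level curves $\{\Re\zeta^k=s\}$ (hyperbolas $x^2-y^2=s$ when $k=2$, polar curves $r^k\cos(k\theta)=s$ in general), combined with $J(\zeta)=J(0)+O(|\zeta|)$, yields
$$F(t_0+s)-F(t_0)=\begin{cases}-J(0)\,s\log|s|+O(|s|),&k=2,\\[2pt]C_k\,\mathrm{sgn}(s)\,|s|^{2/k}+O(|s|),&k\ge 3,\end{cases}$$
as $s\to 0$, for some constant $C_k>0$ (the symmetry between $s>0$ and $s<0$ uses the rotation $\zeta\mapsto e^{i\pi/k}\zeta$, which preserves Lebesgue measure and sends $\Re\zeta^k$ to $-\Re\zeta^k$).

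The conclusion is then immediate. Since $\{G<t_0\}$ contains a neighborhood of $w$ we have $F(t_0)>0$, so
$$\frac{\log F(t_0+s)-\log F(t_0)}{s}=\frac{F(t_0+s)-F(t_0)}{s\,F(t_0)}+o(1)\longrightarrow+\infty\qquad\text{as }s\to 0^+,$$
in both cases $k=2$ and $k\ge 3$. If, however, $\log F$ were convex on some interval $(t_0-\varepsilon,t_0+\varepsilon)$, then the chord slopes $s\mapsto(\log F(t_0+s)-\log F(t_0))/s$ would be nondecreasing on $(0,\varepsilon)$ and hence bounded above by their finite value at $s=\varepsilon/2$, contradicting the divergence to $+\infty$. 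Therefore \eqref{funkcja} is not convex on any interval around $t_0$.

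The only genuine calculation is the asymptotic for the coarea integral in the second paragraph: one has to check that the correction coming from $J(\zeta)-J(0)$ and from truncation at the boundary $\partial B_R$ is strictly smoother than the leading $-s\log|s|$ (respectively $|s|^{2/k}$) term, so that the singularity really survives. Everything else---the reduction to local normal form, the smoothness of the contribution outside $U$, and the convexity failure---is elementary.
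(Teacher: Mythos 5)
Your proposal is correct and follows essentially the same route as the paper: both reduce to the local holomorphic normal form $G-t_0=\Re(\zeta^k)$ at the critical point and use a coarea/area computation to show that the derivative (respectively the difference quotient) of $t\mapsto\lambda(\{G<t\})$ blows up at $t_0$, which is incompatible with convexity of its logarithm. The only difference is cosmetic --- the paper shows $\gamma'(t_j)\to\infty$ along regular values $t_j\to t_0$, while you integrate once more and exhibit the singular term in $F(t_0+s)-F(t_0)$ directly; note also that your claim that the contribution from outside $U$ is $C^\infty$ is stronger than needed (and would require ruling out other critical points on the level $\{G=t_0\}$), but monotonicity of that contribution already suffices for your argument.
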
  

Note that for example any regular domain $\Omega$ which is not 
simply connected satisfies the assumption of Theorem \ref{thm4}
for any $w$: it is enough to take maximal $t_0$ such that 
$\{G<t_0\}$ is simply connected. Then there exists $z_0$
such that $\nabla G(z_0)=0$.

\section{Upper bounds for the Bergman kernel}

In this section we will prove Theorems \ref{thm1} and \ref{thm2}.

\begin{proof}[Proof of Theorem \ref{thm1}]
We may assume that $\Omega$ is bounded and smooth, $w=0$, and
$r<\delta_\Omega(0)$. Take $f\in A^2(\Omega)$, without loss
of generality we may take such an $f$ which is defined in a 
neighborhood of $\overline\Omega$. 
Let $u\in C^\infty(\overline\Omega\setminus\Delta_r)$, where 
$\Delta_r:=\Delta(0,r)$, be harmonic in 
$\Omega\setminus\overline\Delta_r$ and such that
$u=1$ on $\partial\Omega$ and $u=0$ on $\partial\Delta_r$. Then
  $$f(0)=\frac 1{2\pi i}\int_{\partial\Omega}\frac{f(z)}z\,dz
    =\frac 1{2\pi i}
     \int_{\partial(\Omega\setminus\overline\Delta_r)}
       \frac{fu}z\,dz
    =\frac 1\pi
     \int_{\Omega\setminus\overline\Delta_r}
       \frac{fu_{\bar z}}z\,d\lambda.$$
Therefore
  $$|f(0)|^2\leq\frac{||f||^2}{\pi^2r^2}       
       \int_{\Omega\setminus\overline\Delta_r}
            |u_{\bar z}|^2d\lambda
       =\frac{||f||^2}{4\pi^2r^2}       
       \int_{\partial\Omega}u_n\,d\sigma,$$
where $u_n$ denotes the outer normal derivative of $u$ at 
$\partial\Omega$. Denoting $G=G_\Omega(\cdot,0)$, we have
  $$\frac G{\displaystyle-\max_{\overline\Delta_r}G}+1\leq u,$$
and therefore on $\partial\Omega$
  $$u_n\leq\frac{G_n}{\displaystyle-\max_{\overline\Delta_r}G}.$$
The required estimate now follows from the fact that
  $$\int_{\partial\Omega}G_nd\sigma=2\pi.$$ 
\end{proof}

\begin{proof}[Proof of Theorem \ref{thm2}]
Denote $G=G_\Omega(\cdot,w)$, $R=\delta_\Omega(w)$ and assume 
that $0<r<R$. Then by the Poisson formula
  $$\max_{\overline\Delta(w,r)}G
     \leq\frac{R-r}{2\pi(R+r)}\int_0^{2\pi}G(w+Re^{it})dt
     =\frac{R-r}{R+r}\log(Rc_\Omega(w)).$$
By Theorem \ref{thm1}
  $$K_\Omega(w)\leq\frac{R+r}{2\pi(R-r)r^2\log(1/(Rc_\Omega(w)))}.$$
We can now take for example $r=R/2$ and the estimate follows.
\end{proof}

The smallest constant the above proof gives will be obtained for
$r=(\sqrt 5-1)R/2$, then
  $$C=\frac{11+5\sqrt 5}{4\pi}.$$

\section{Proof of the lower bound for $K_\Omega^{(j)}$}

In this section we prove Theorem \ref{thm3}. We follow the 
method from \cite{Binv}. We could have also used another 
method from \cite{Blb} but this would require to go to several
complex variables and we prefer to have a purely one-dimensional
argument. 

\begin{proof}[Proof of Theorem \ref{thm3}]
We assume that $w=0$, $\Omega$ is bounded and smooth, and denote
$G=G_\Omega(\cdot,0)$. Set
  $$\alpha:=\frac{\partial}{\partial\bar z}\left(z^j
     \chi(|z|)\right)=\frac{z^{j+1}\chi'(|z|)}{2|z|}$$
and
  $$\varphi:=(2j+2)G+\eta\circ G,\ \ \ \psi:=\gamma\circ G,$$
where $\chi\in C^{0,1}((0,\infty))$, 
$\eta\in C^{1,1}((-\infty,0))$,
$\gamma\in C^{0,1}((-\infty,0))$ will be defined later. We assume
that $\eta$ is convex and nondecreasing (so that $\varphi$
is subharmonic), $(\gamma')^2<\eta''$, and that 
$(\gamma'\circ G)^2\leq\delta\eta''\circ G$ on the support of $\alpha$ 
for some constant $\delta$ with $0<\delta<1$. Then by Theorem 2 in 
\cite{Binv} one can find $u\in L^2_{loc}(\Omega)$ such that
$F:=z^j\chi(|z|)-u$ is holomorphic and
\begin{equation}\label{dbe}
  \int_\Omega|u|^2\,\Gamma\circ G\,d\lambda\leq
    \frac{1+\sqrt\delta}{1-\sqrt\delta}\int_\Omega
    \frac{|\alpha|^2}{\eta''\circ G\,|G_z|^2}\,
      e^{2\psi-\varphi}\,d\lambda,
\end{equation}
where
  $$\Gamma:=\left(1-\frac{(\gamma')^2}{\eta''}\right)
     e^{2\gamma-\eta-(2j+2)t}.$$       
Take $\ep>0$ and assume that $\chi(|z|)\subset\{|z|\leq\ep\}$.
We choose $T=T(\ep)<0$ such that $\{|z|\leq\ep\}\subset\{G\leq T\}$.
Since $|G-\log|z||\leq C_0$ near $0$, we may take $T:=\log\ep+C_0$.

Similarly as in \cite{Binv} for $s<0$ we define
\begin{align*}
  \eta_0(s)&:=-\log(-s+e^s-1),\\
  \gamma_0(s)&:=-\log(-s+e^s-1)+\log(1-e^s),
\end{align*}
so that 
  $$\left(1-\frac{(\gamma_0')^2}{\eta_0''}\right)
     e^{2\gamma_0-\eta_0-s}=1$$   
and
\begin{equation}\label{war}
  \lim_{s\to -\infty}\left(2\gamma_0(s)-\eta_0(s)
     -\log\eta_0'(s)\right)=0.
\end{equation}
We now set
  $$\eta(t):=\begin{cases}\eta_0\big((2j+2)t\big)\ &t\geq T,\\
      -\delta\log(T-t+a)+b\ &t<T,\end{cases}$$
and
  $$\gamma(t):=\begin{cases}\gamma_0\big((2j+2)t\big)\ &t\geq T,\\
      -\delta\log(T-t+a)+\wt b\ &t<T,\end{cases}$$
where $\delta=\delta(\ep)>0$ will be determined later and 
$a,b,\wt b$ are uniquely determined by the conditions 
$\eta\in C^{1,1}$, $\gamma\in C^{0,1}$:
\begin{align*}
  a&=a(\ep)=\displaystyle
           \frac\delta{(2j+2)\eta_0'\big((2j+2)T\big)},\\
  b&=b(\ep)=\eta_0\big((2j+2)T\big)+\delta\log a,\\
  \wt b&=\wt b(\ep)=\gamma_0\big((2j+2)T\big)+\delta\log a.
\end{align*}
We see that if we choose $\delta=\sqrt{-T}$ then 
$\delta(\ep)\to 0$ and $a(\ep)\to\infty$ as $\ep\to 0$.

On $(-\infty,T)$ we have $(\gamma')^2=\delta\eta''$ and
  $$\Gamma=(1-\delta)e^{2\wt b-b}
      \frac{e^{-(2j+2)t}}{(T-t+a)^\delta},$$
so that $|z|^{2j}\Gamma\circ G$ is not locally integrable
near 0. By \eqref{dbe} it implies that 
$F(0)=F'(0)=\dots=F^{(j-1)}(0)=0$ and $F^{(j)}(0)=j!\,\chi(0)$.
One can also check that $\Gamma\geq 1$ on $(-\infty,T)$.

Since $|2G_z-1/z|\leq C_1$ near 0, we have $2|z||G_z|\geq 1-C_1\ep$
on $\{|z|\leq\ep\}$. There we also have
\begin{align*}
  \frac{e^{2\psi-\varphi}}{\eta''\circ G}
          &=\frac{e^{2\wt b-b}}\delta(T-G+a)^{2-\delta}
                               e^{-(2j+2)G}\\
          &\leq\frac{e^{2\wt b-b}}
            \delta(\log\ep-\log|z|+2C_0+a)^{2-\delta}e^{-(2j+2)G}.
\end{align*}
Therefore the right-hand side of \eqref{dbe} can bounded from above 
by
\begin{equation}\label{bd}
     \frac{(1+\sqrt\delta)e^{2\wt b-b}\mathcal A(\ep)}
       {\delta(1-\sqrt\delta)(c_\Omega(0))^{2j+2}}
     \int_{\{|z|\leq\ep\}}(\chi'(|z|))^2
       (\log\ep-\log|z|+2C_0+a)^{2-\delta}d\lambda,
\end{equation}
where $\mathcal A(\ep)\to 1$ as $\ep\to 0$. The optimal
choice for $\chi$ is
  $$\chi(r)=(2C_0+a)^{\delta-1}-(\log\ep-\log r+2C_0+a)^{\delta-1},$$
then \eqref{bd} takes the form
  $$\frac{2\pi(1+\sqrt\delta)^2e^{2\wt b-b}(2C_0+a)^{\delta-1}
    \mathcal A(\ep)}{\delta(c_\Omega(0))^{2j+2}}.$$
Note that
  $$\frac{e^{2\wt b-b}(2C_0+a)^{1-\delta}}\delta=
    \left(\frac{2C_0+a)}a\right)^{1-\delta}
    \frac{e^{2\gamma_0(S)-\eta_0(S)-\log\eta_0'(S)}}{2j+2},$$    
where $S=(2j+2)T$. Combining this with \eqref{war} and the fact 
that $\chi(0)=(2C_0+a)^{\delta-1}$ we will obtain
  $$\liminf_{\ep\to 0}\frac{|F^{(j)}(0)|^2}{||F||^2}
    =\frac{j!(j+1)!}\pi(c_\Omega(w))^{2j+2}.$$
\end{proof}

Using standard methods we will also prove the following formula 
for the Laplacian of $\log K_\Omega^{(j)}$. It is of course
well known for $j=0$ and the proof is essentially the same
in general.

\begin{prop}\label{max}
For a domain $\Omega$ in $\mathbb C$ such that 
$\mathbb C\setminus\Omega$ is not polar and $j=0,1,\dots$ we have
  $$\frac{\partial^2}{\partial z\partial\bar z}
     \left(\log K_\Omega\right)=\frac{K^{(j+1)}_\Omega}
        {K^{(j)}_\Omega}.$$
\end{prop}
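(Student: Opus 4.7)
The plan is to generalize the standard computation of the $j=0$ case, in which $\partial_z \partial_{\bar z}\log K_\Omega$ is identified as $K^{(1)}_\Omega/K_\Omega$, by using an orthonormal basis of $A^2(\Omega)$ adapted to the point $w$ and to the nested subspaces $V_k(w) = \{f\in A^2(\Omega) : f^{(i)}(w) = 0,\ 0\le i<k\}$. Concretely, I would pick orthonormal $\phi_0,\ldots,\phi_j$ so that each $\phi_k$ spans the one-dimensional space $V_k(w)\ominus V_{k+1}(w)$ (nontrivial by non-polarity of $\mathbb{C}\setminus\Omega$), then extend to an orthonormal basis $\{\phi_n\}_{n\ge 0}$ of $A^2(\Omega)$ whose tail $\{\phi_n\}_{n>j}$ is an orthonormal basis of $V_{j+1}(w)$. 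By construction $\phi_k^{(i)}(w) = 0$ for $i<k$, so $\phi_n^{(j)}(w) = 0$ for all $n>j$, the extremal characterisation gives $K^{(j)}_\Omega(w) = |\phi_j^{(j)}(w)|^2$, and likewise $K^{(j+1)}_\Omega(w) = \sum_{n>j}|\phi_n^{(j+1)}(w)|^2$.

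The main computation is then a direct Wirtinger calculation on the polarised kernel $\sum_n\phi_n^{(j)}(z)\overline{\phi_n^{(j)}(\zeta)}$, whose diagonal is $K^{(j)}_\Omega$. Differentiating gives $\partial_z K^{(j)}_\Omega = \sum_n\phi_n^{(j+1)}\overline{\phi_n^{(j)}}$ and $\partial_z\partial_{\bar z}K^{(j)}_\Omega = \sum_n|\phi_n^{(j+1)}|^2$. Plugging these into the identity
$\partial_z\partial_{\bar z}\log K^{(j)}_\Omega = \bigl(K^{(j)}_\Omega\,\partial_z\partial_{\bar z}K^{(j)}_\Omega - |\partial_z K^{(j)}_\Omega|^2\bigr)/(K^{(j)}_\Omega)^2$
and noting that at $w$ the sum $\partial_z K^{(j)}_\Omega(w)$ collapses to the single term $\phi_j^{(j+1)}(w)\overline{\phi_j^{(j)}(w)}$, the cross term cancels precisely the $n = j$ contribution from $\partial_z\partial_{\bar z}K^{(j)}_\Omega(w)$; what remains is $|\phi_j^{(j)}(w)|^2\sum_{n>j}|\phi_n^{(j+1)}(w)|^2/(K^{(j)}_\Omega(w))^2$, which simplifies to $K^{(j+1)}_\Omega(w)/K^{(j)}_\Omega(w)$.

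The main technical obstacle is making the above Wirtinger differentiation rigorous, because the subspace $V_j(w)$ and hence the adapted basis move with $w$, so one cannot literally differentiate $K^{(j)}_\Omega(w) = \sum_n|\phi_n^{(j)}(w)|^2$ with a single fixed basis. The standard remedy is to work with a basis-free representation of $K^{(j)}_\Omega$: either the polarisation $\partial_z^j\partial_{\bar\zeta}^j K^{V_j(w)}(z,\zeta)\bigr|_{z=\zeta=w}$ in terms of the reproducing kernel of $V_j(w)$, or, equivalently, the Gram-type ratio whose entries are the mixed partials $\partial_z^l\partial_{\bar\zeta}^k K_\Omega(z,\zeta)|_{z=\zeta=w}$. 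Either intrinsic representation yields a smooth function of $w$ that can be differentiated directly; when the resulting Hermitian matrices are diagonalised at the evaluation point by the adapted basis, one recovers exactly the identity of the proposition. Matching these intrinsic derivatives with the term-by-term adapted-basis manipulation sketched above is the only step beyond the familiar $j = 0$ argument.
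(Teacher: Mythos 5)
Your proposal is correct and follows essentially the same route as the paper: an orthonormal basis adapted to the nested subspaces $H_k$ at $w$, the collapse of the sums $\sum_n\phi_n^{(j)}(w)$ and $\sum_n\phi_n^{(j+1)}(w)$ to one and two terms respectively, and the quotient-rule identity for $(\log K^{(j)}_\Omega)_{z\bar z}$ with the cross term cancelling the $n=j$ contribution. Your closing remark on why the term-by-term differentiation needs an intrinsic (basis-free) justification, since the subspace $H_j(w)$ moves with $w$, is a point the paper passes over silently, but it does not change the argument.
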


\begin{proof}
Denote $H_0=A^2(\Omega)$ and for a fixed $w\in\Omega$ and
$k=1,2,\dots$ set
  $$H_k:=\{f\in A^2(\Omega)\colon f(w)=f'(w)=\dots=f^{(k-1)}(w)=0\},$$
Since $H_k$ is of codimension at most 1 in $H_{k-1}$, we can find 
an orthonormal system $\varphi_0,\varphi_1,\dots$  in
$A^2(\Omega)$ such that $\varphi_k\in H_k$ for all $k$. This
means that $\varphi_l^{(j)}(w)=0$ for $l>j$. For $f\in H_j$ we have
  $$f=\sum_{l\geq j}\langle f,\varphi_l\rangle\,\varphi_l.$$
Therefore
  $$K_\Omega^{(j)}(z)=\sum_{l\geq j}|\varphi_l^{(j)}(z)|^2$$
and 
  $$K_\Omega^{(j)}(w)=|\varphi_j^{(j)}(w)|^2.$$
Since
  $$(\log K)_{z\bar z}=\frac{KK_{z\bar z}-|K_z|^2}{K^2}$$
and 
  $$(K_\Omega^{(j)})_{z\bar z}(w)
     =|\varphi_j^{(j+1)}(w)|^2+|\varphi_{j+1}^{(j+1)}(w)|^2,$$
  $$(K_\Omega^{(j)})_z(w)
    =\varphi_j^{(j+1)}(w)\,\overline{\varphi_j^{(j)}(w)},$$
we will obtain
  $$(\log K_\Omega^{(j)})_{z\bar z}(w)
     =\frac{|\varphi_{j+1}^{(j+1)}(w)|^2}
          {|\varphi_{j}^{(j)}(w)|^2}$$
and the proposition follows.          
\end{proof}

\section{Proof of Theorem \ref{thm4}}

Let $t_j\to t_0$ be a sequence of regular values for $G$.
It will be enough to show that $\gamma'(t_j)\to\infty$, where
$\gamma(t)=\lambda(\{G_\Omega(\cdot,w)<t\})$. By the co-area
formula we have
  $$\gamma(t)=\int_{-\infty}^t\int_{\{G=s\}}
     \frac{d\sigma}{|\nabla G|}\,ds,$$
and therefore 
  $$\gamma'(t_j)=\int_{\{G=t_j\}}\frac{d\sigma}{|\nabla G|}.$$
It is convenient to assume that $z_0=0$. Since $G$ is harmonic in 
$\Omega\setminus\{w\}$, it follows that there exists a holomorphic 
$h$ near $0$ such that $h(0)\neq 0$ and for some $n\geq 2$ we have 
  $$G(z)=t_0+\Re\left(z^nh(z)\right).$$
It follows that near $0$ we have
  $$|\nabla G(z)|\leq C_1|z|^{n-1}.$$  
We can also find a biholomorphic $F$ near 0 such that 
$G(F(\zeta))=t_0+\Re(\zeta^n)$. We then have
  $$|\nabla G(F(\zeta))|\leq C_2|\zeta|^{n-1}$$  
and for some $r>0$
  $$\int\limits_{\{G=t_j\}}\frac{d\sigma}{|\nabla G|}
     \geq\frac 1{C_3}\int\limits_{
        \{\zeta\in\Delta(0,r)\colon\Re(\zeta^n)=t_j-t_0\}}
    \frac{d\sigma}{|\zeta|^{n-1}}\longrightarrow\infty$$
as $j\to\infty$. \qed

\end{document}